\newtheorem{theorem}{Theorem}[section]
\newtheorem{corollary}[theorem]{Corollary}
\newtheorem{lemma}[theorem]{Lemma}
\theoremstyle{definition}
\newtheorem{example}[theorem]{Example}
\theoremstyle{parrafo}
\begin{document}

\title[]{Sharp bounds for the difference between the arithmetic and
geometric means}

\author{J. M. Aldaz}
\address{Departamento de Matem\'aticas,
Universidad  Aut\'onoma de Madrid, Cantoblanco 28049, Madrid, Spain.}
\email{jesus.munarriz@uam.es}

\thanks{2000 {\em Mathematical Subject Classification.} 26D15}

\thanks{The author was partially supported by Grant MTM2009-12740-C03-03 of the
D.G.I. of Spain}

\thanks{2000 {\em Mathematical Subject Classification.} 26D15}


\keywords{Variance, Arithmetic-Geometric inequality}




\begin{abstract} We present sharp  bounds for $\sum_{i=1}^n \alpha_i x_i - \prod_{i=1}^n x_i^{\alpha_i}$ in terms of the variance of the vector $(x_1^{1/2},\dots,x_n^{1/2})$.
\end{abstract}


\maketitle


\markboth{J. M. Aldaz}{AM-GM}

\section{Introduction} Let us start by fixing some notation.
We use $X$ to denote the vector with non-negative
entries $(x_1,\dots,x_n)$. Of
course, $X$ can also be regarded as the function $X:\{1,\dots, n\}\to [0,\infty)^n$
satisfying $X(i) = x_i$. Then for $g : [0,\infty)\to [0,\infty)$, $g(X)$ is defined as the usual composition of functions. In particular, if $g(t) = t^{1/2}$, $X^{1/2} = (x_1^{1/2},\dots,x_n^{1/2})$.
Given a sequence of weights $\alpha = (\alpha_1,\dots,\alpha_n)$ with $\alpha_i > 0$ and $\sum_{i=1}^n \alpha_i = 1$, and a vector $Y = (y_1,\dots,y_n)$, the variance of $Y$ with respect to $\alpha$
is $\operatorname{Var}_\alpha (Y) = \sum_{i=1}^n \alpha_i \left(y_i - \sum_{k=1}^n \alpha_k y_k \right)^2
= \sum_{i=1}^n \alpha_i y_i^2 - \left(\sum_{k=1}^n \alpha_k y_k \right)^2.$ When $\alpha = (1/n, \dots, 1/n)$ we simply write $\operatorname{Var}(Y)$.
We also use $E_\alpha(Y) := \sum_{k=1}^n \alpha_k y_k$ and
$\Pi_\alpha Y := \prod_{i=1}^n y_i^{\alpha_i}$, with
$E(Y)$ and
$\Pi Y$ denoting the equal weights case. Finally, $Y_{\max}$ and
$Y_{\min}$ respectively stand for the maximum and the minimum values of $Y$.

The inequality between
arithmetic and geometric means
$0 \le E_\alpha X - \Pi_\alpha X$ is self-improving in several ways.
In particular,
it immediately entails that $\operatorname{Var}(X^{1/2})\le E_\alpha X - \Pi_\alpha X $: Just write $E_\alpha X -
(E_\alpha X^{1/2})^2 \le E_\alpha X - (\Pi_\alpha X^{1/2})^2$
(this already has useful consequences, as observed in
 \cite{A1}). Conceptually, variance bounds for $E_\alpha X - \Pi_\alpha X$
represent the natural extension of the equality case in the AM-GM
inequality (zero variance is equivalent to equality). Here we prove that
\begin{equation*}
\frac{1}{1-\alpha_{\min}}\operatorname{Var}_\alpha(X^{1/2})\le E_\alpha X - \Pi_\alpha X
\le
\frac{1}{\alpha_{\min}} \operatorname{Var_\alpha}(X^{1/2}),
\end{equation*}
and both bounds are sharp. We also present a standard application to
H\"older's inequality. The author is indebted to Prof. A. Bravo for
some helpful comments.

\section{Sharp bounds and applications}

Since we seek bounds in terms of variances (but independent of the
specific entries of $X$ itself)
and since $E_\alpha X - \Pi_\alpha X$ is 1-homogeneous (so for $t > 0$,
$E_\alpha tX - \Pi_\alpha tX = t\left(E_\alpha X - \Pi_\alpha X\right)$)
the corresponding bounds must also be 1-homogeneous. This  restricts our choices
 to essentially two possibilities: Either find bounds in terms of the
standard deviation $\sigma(X)$ of $X$, or in terms of the variance of $X^{1/2}$.
However $\sigma(X)$ does not satisfy any lower bound, as the following example shows, so we are left with just one possibility.

\begin{example} For no constant $c >0$ does the inequality
$c \sigma (X) \le  E_\alpha X - \Pi_\alpha X$ always hold. To see
this, just take $n=2$, $\alpha =(1/2, 1/2)$, and
$X = (1+ \varepsilon, 1- \varepsilon)$. Then $\sigma(X) = \varepsilon$,
while $E X - \Pi X = O(\varepsilon^2)$, so the assertion follows by letting $\varepsilon\downarrow 0$.
\end{example}

\begin{theorem}\label{AMGMVar1}  For $n\ge 2$ and $i=1,\dots, n$, let $X = (x_1,\dots,x_n)$ be such that $x_i\ge 0$, and
let $\alpha = (\alpha_1,\dots,\alpha_n)$ satisfy
$\alpha_i > 0$ and $\sum_{i=1}^n \alpha_i = 1$.  Then
\begin{equation}\label{AMGMVar1eq}
\frac{1}{1-\alpha_{\min}}\operatorname{Var}_\alpha(X^{1/2})\le E_\alpha X - \Pi_\alpha X
\le
\frac{1}{\alpha_{\min}} \operatorname{Var_\alpha}(X^{1/2}).
\end{equation}
In particular, if $\alpha = (n^{-1},\dots,n^{-1})$, then
\begin{equation}\label{AMGMVar1eqeq}
\frac{n}{n-1}\operatorname{Var}(X^{1/2})\le E X - \Pi X
\le
n \operatorname{Var}(X^{1/2}).
\end{equation}
\end{theorem}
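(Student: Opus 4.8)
The plan is to reduce both inequalities in \eqref{AMGMVar1eq} to a one–variable estimate by exploiting homogeneity and then a convexity/tangent–line comparison. Writing $y_i = x_i^{1/2}$ and $m = E_\alpha(X^{1/2}) = \sum_k \alpha_k y_k$, we have $\operatorname{Var}_\alpha(X^{1/2}) = \sum_i \alpha_i (y_i - m)^2$, while $E_\alpha X = \sum_i \alpha_i y_i^2$ and $\Pi_\alpha X = \prod_i y_i^{2\alpha_i} = \bigl(\prod_i y_i^{\alpha_i}\bigr)^2$. Using $\sum_i \alpha_i y_i^2 = m^2 + \operatorname{Var}_\alpha(X^{1/2})$, the quantity to be bounded becomes
\begin{equation*}
E_\alpha X - \Pi_\alpha X = \operatorname{Var}_\alpha(X^{1/2}) + m^2 - \Bigl(\Pi_\alpha(X^{1/2})\Bigr)^2 = \operatorname{Var}_\alpha(X^{1/2}) + \bigl(m - \Pi_\alpha(X^{1/2})\bigr)\bigl(m + \Pi_\alpha(X^{1/2})\bigr).
\end{equation*}
So the two desired inequalities are equivalent, after dividing by $\operatorname{Var}_\alpha(X^{1/2})$ (the case of zero variance being trivial), to sharp two–sided bounds for
\begin{equation*}
\frac{\bigl(E_\alpha(X^{1/2}) - \Pi_\alpha(X^{1/2})\bigr)\bigl(E_\alpha(X^{1/2}) + \Pi_\alpha(X^{1/2})\bigr)}{\operatorname{Var}_\alpha(X^{1/2})}
\end{equation*}
in terms of $\alpha_{\min}$; by $1$-homogeneity in $X^{1/2}$ we may normalize, say $m = E_\alpha(X^{1/2}) = 1$.

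The core step is therefore a sharp estimate, with $Y = X^{1/2}$ normalized so that $E_\alpha(Y)=1$, for the AM--GM gap $1 - \Pi_\alpha Y$ against $\operatorname{Var}_\alpha(Y) = E_\alpha(Y^2) - 1$. I would obtain this from the elementary inequality, valid for $t > -1$ and a parameter $\lambda \in (0,1]$,
\begin{equation*}
\lambda\bigl(\log(1+t) - t\bigr) \le -\tfrac{1}{2}t^2 \quad\text{when } 1+t \ge \lambda,
\end{equation*}
i.e. a comparison between $\log$ and a downward parabola that is tight at $t=0$. Applying this with $t = y_i - 1$ and $\lambda = \alpha_{\min}$ (noting $y_i \ge 0$, and using that the smallest possible $y_i$ relative to the mean is controlled when one weight is large — this is where $\alpha_{\min}$ enters, since $y_i = \sum_k \alpha_k y_k - \sum_{k\ne i}\alpha_k y_k \ge 1 - (1-\alpha_i)\cdot(\text{something})$, so the relevant lower bound on $1+t$ across all coordinates after suitable scaling is $\alpha_{\min}$), then multiplying by $\alpha_i$ and summing over $i$, the linear terms cancel because $\sum_i \alpha_i(y_i - 1) = 0$, yielding $\alpha_{\min}\log \Pi_\alpha Y \le -\tfrac12 \operatorname{Var}_\alpha(Y)$, hence $\log\Pi_\alpha Y \le -\operatorname{Var}_\alpha(Y)/(2\alpha_{\min})$; exponentiating and combining with $E_\alpha(Y)+\Pi_\alpha Y \le 2$ gives the lower bound in \eqref{AMGMVar1eq}. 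For the upper bound one runs the same scheme with the reversed one–variable inequality $(1-\alpha_{\min})\bigl(\log(1+t)-t\bigr) \ge -\tfrac12 t^2$ on the complementary range, the factor $1-\alpha_{\min}$ now arising because $1 + t = y_i \le 1 + (1-\alpha_i)\,\|Y\|$-type bound, i.e. a single coordinate can exceed the mean by at most a factor governed by $1-\alpha_i \ge 1-\alpha_{\min}$.

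The main obstacle I anticipate is pinning down exactly the right one–variable inequalities and the precise role of $\alpha_{\min}$ versus $1-\alpha_{\min}$ — in particular verifying that the constants $1/\alpha_{\min}$ and $1/(1-\alpha_{\min})$ are sharp, which requires exhibiting extremal configurations. The natural candidates are degenerate vectors: for sharpness of the upper bound, take $X$ with $x_j \to$ some value for the index $j$ achieving $\alpha_j = \alpha_{\min}$ and all other $x_i$ equal, and let the spread shrink to zero so that only the second–order (variance) term survives; for the lower bound, the extremal case should again concentrate the ``outlier'' on the coordinate of least weight but on the opposite side. Checking these limiting cases — expanding $E_\alpha X - \Pi_\alpha X$ and $\operatorname{Var}_\alpha(X^{1/2})$ to second order around equality — is routine Taylor expansion but must be done carefully to confirm that the ratio tends exactly to $1/\alpha_{\min}$ (resp. $1/(1-\alpha_{\min})$). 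The specialization \eqref{AMGMVar1eqeq} is then immediate on setting $\alpha_i = 1/n$, so that $\alpha_{\min} = 1/n$ and $1-\alpha_{\min} = (n-1)/n$.
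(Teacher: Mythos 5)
Your opening reduction is fine: writing $Y=X^{1/2}$, $m=E_\alpha Y$, one indeed has $E_\alpha X-\Pi_\alpha X=\operatorname{Var}_\alpha(Y)+(m-\Pi_\alpha Y)(m+\Pi_\alpha Y)$, and by homogeneity one may take $m=1$. But the core one-variable inequality you build on is false as stated: setting $s=1+t$ and $g(s)=\tfrac12(s-1)^2+\lambda\bigl(\log s-s+1\bigr)$, one computes $g'(s)=(s-1)(s-\lambda)/s$, so on $[\lambda,\infty)$ the minimum of $g$ is $g(1)=0$; hence $\lambda\bigl(\log(1+t)-t\bigr)\ge-\tfrac12 t^2$ there, the \emph{reverse} of your display (e.g.\ $\lambda=\tfrac12$, $t=1$ gives $-0.15\ldots\not\le-0.5$). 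Worse, the side condition $1+t\ge\lambda$ cannot be secured: after normalizing $E_\alpha Y=1$, the weights only give the upper bound $y_i\le 1/\alpha_i$; there is no lower bound on $y_i/m$ at all, since coordinates may be arbitrarily small or zero. This is precisely why the paper treats vectors with a vanishing entry separately (Lemma \ref{zero}) and why Cartwright--Field type bounds require $X_{\min}>0$. So the mechanism by which $\alpha_{\min}$ is supposed to enter your one-variable estimate is unjustified, and for the lower bound of \eqref{AMGMVar1eq} you need a \emph{lower} bound on $(m-\Pi_\alpha Y)(m+\Pi_\alpha Y)$, so invoking $E_\alpha Y+\Pi_\alpha Y\le 2$ points the wrong way.

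Even after correcting the orientation (the valid statements are $\log s\ge(s-1)-\frac{(s-1)^2}{2\lambda}$ for $s\ge\lambda$, and $\log s\le(s-1)-\frac{(s-1)^2}{2M}$ for $0<s\le M$, with $M=1/\alpha_{\min}$ the only bound available), the proposed assembly --- sum with weights, exponentiate, then bound $m+\Pi_\alpha Y$ crudely --- cannot produce the sharp constants. For instance it yields $\Pi_\alpha Y\le e^{-\alpha_{\min}V/2}$ with $V=\operatorname{Var}_\alpha(Y)$, whence $1-(\Pi_\alpha Y)^2\ge 1-e^{-\alpha_{\min}V}\approx\alpha_{\min}V$ for small $V$, strictly weaker than the required $\frac{\alpha_{\min}}{1-\alpha_{\min}}V$; and since equality in \eqref{AMGMVar1eq} occurs at genuinely non-infinitesimal configurations (e.g.\ $x_1=1$, all other $x_i=0$), lossy steps such as exponentiating a tangent bound cannot recover sharpness. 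The paper's route is different in substance: induction on $n$, with the case $n=2$ settled by a refined Young inequality (Lemmas \ref{equal2}--\ref{ineq2}), zero entries handled by Lemma \ref{zero}, and a Lagrange-multiplier argument showing that an interior critical point on the simplex has at most two distinct coordinates, so two equal coordinates can be merged and the induction hypothesis applied with $\beta_{\min}\ge\alpha_{\min}$. As it stands, your plan does not prove the theorem.
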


\begin{example} Note that when $n=2$ the left and right hand sides of (\ref{AMGMVar1eqeq})
are equal, so in general  neither bound can be improved. In fact,
equality can be attained on both sides of (\ref{AMGMVar1eqeq})
 for arbitrary
values of $n$. To see this, on the left hand side let $n >1$
 and let
$x_1 = 1$, $x_2 = \cdots  x_n = 0$. Since
 $\alpha = (n^{-1},\dots,n^{-1})$, we have $\frac{1}{\alpha_{\min}} =n$,
$EX= \frac{1}{n}$,  and $\operatorname{Var}(X^{1/2})= \frac{n-1}{n^2}$, so equality holds.
For the right hand side,  let
$x_1 = \cdots =x_{n-1} = 1$, and $x_n = 0$.
Then $\frac{1}{\alpha_{\min}} =n$, $EX= \frac{n-1}{n}$,  and $\operatorname{Var}(X^{1/2})= \frac{n-1}{n^2}$, so
again equality holds.
\end{example}

The preceding result is motivated by \cite[Theorem]{CaFi}, which states that if $0 < X_{\min}$,
then
\begin{equation}\label{cafieq}
\frac{1}{2 X_{\max} }  \operatorname{Var}_\alpha(X)
\le
E_\alpha X - \Pi {}_\alpha X
\le
\frac{1}{2 X_{\min}}  \operatorname{Var}_\alpha(X)
\end{equation}
 (cf.  also \cite{Alz}, \cite{Me},  \cite{A2}, \cite{A5} for additional
refinements and references, and \cite{A3} for probabilistic information
regarding the GM-AM ratio).

A drawback of (\ref{cafieq}) is that since the inequalities depend on $X_{\max}$ and $X_{\min}$,
they are not well suited for standard arguments where pointwise inequalities are integrated. As an instance, to obtain a refinement of  H\"older's
 Inequality from (\ref{cafieq}), one would have to assume a priori that functions are bounded
 away from 0 and $\infty$, which is  too restrictive, while using (\ref{AMGMVar1eq}) does not
require any such assumption. The standard argument used to derive
 H\"older's inequality from the AM-GM inequality  applies verbatim
to refinements  (cf. \cite[Theorem 2.2]{A4}
for the case of two functions, and \cite[Corollary 2]{A1} for the upper
bound with a weaker constant).

Regarding the meaning of the inequalities
below, they just say that the ``more different" the functions are,
the smaller their product is, and viceversa. Now, since in principle these functions
belong to different spaces, to compare them they are first normalized,
and then mapped to $L^2$ via the Mazur's map (which
has controlled distortion) so differences are measured in $L^2$.

\begin{corollary}\label{betterhold}  For $i = 1,\dots, n$, let $1 < p_i < \infty$
be such that $p_1^{-1} + \cdots + p_n^{-1} = 1$, and let $0\le f_i\in L^{p_i}$
satisfy  $\|f_i\|_{p_i}  > 0$.  Then
\begin{equation}\label{bonhold1}
\prod_{i=1}^n\|f_i\|_{p_i} \left(1 - p_{\max}\sum_{i=1}^n \frac1{p_i}
\left\|\frac{f_i^{p_i/2}}{\|f_i\|_{p_i}^{p_i/2}} -
\sum_{k=1}^n \frac1{p_k} \frac{f_k^{p_k/2}}{\|f_k\|_{p_k}^{p_k/2}}\right\|_2^2\right)_+ \le
\end{equation}
\begin{equation}\label{bonhold2}
\left\|\prod_{i=1}^n f_i\right\|_1
\le
\prod_{i=1}^n\|f_i\|_{p_i} \left(1 - \frac{p_{\max}}{p_{\max} -1}\sum_{i=1}^n \frac1{p_i}
\left\|\frac{f_i^{p_i/2}}{\|f_i\|_{p_i}^{p_i/2}} -
\sum_{k=1}^n \frac1{p_k} \frac{f_k^{p_k/2}}{\|f_k\|_{p_k}^{p_k/2}}\right\|_2^2\right).
\end{equation}
\end{corollary}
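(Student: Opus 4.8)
The plan is to derive (\ref{bonhold1})--(\ref{bonhold2}) from Theorem \ref{AMGMVar1} by the classical device of applying an AM--GM-type inequality pointwise and then integrating. First I would normalize the functions: set $g_i := f_i / \|f_i\|_{p_i}$, which is legitimate since $0 < \|f_i\|_{p_i} < \infty$, so that $g_i \ge 0$ and $\|g_i\|_{p_i} = 1$. For almost every $t$ (namely, wherever every $g_i(t)$ is finite) I would apply (\ref{AMGMVar1eq}) with the given $n$, weights $\alpha_i = 1/p_i$ (so $\sum_i \alpha_i = 1$ and $\alpha_{\min} = 1/p_{\max}$), and the non-negative vector $X(t) = (g_1(t)^{p_1}, \dots, g_n(t)^{p_n})$. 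Here $\Pi_\alpha X(t) = \prod_i g_i(t)$, $E_\alpha X(t) = \sum_i p_i^{-1} g_i(t)^{p_i}$, and $X(t)^{1/2} = (g_1(t)^{p_1/2}, \dots, g_n(t)^{p_n/2})$, so that $\operatorname{Var}_\alpha(X(t)^{1/2}) = \sum_i p_i^{-1}\big(g_i(t)^{p_i/2} - \sum_k p_k^{-1} g_k(t)^{p_k/2}\big)^2$.

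With these substitutions (\ref{AMGMVar1eq}) becomes, for a.e.\ $t$, a two-sided pointwise estimate on $\prod_i g_i(t)$: the \emph{upper} bound in (\ref{AMGMVar1eq}) gives $\prod_i g_i(t) \ge \sum_i p_i^{-1} g_i(t)^{p_i} - p_{\max}\operatorname{Var}_\alpha(X(t)^{1/2})$, and the \emph{lower} bound gives $\prod_i g_i(t) \le \sum_i p_i^{-1} g_i(t)^{p_i} - \frac{p_{\max}}{p_{\max}-1}\operatorname{Var}_\alpha(X(t)^{1/2})$, using $\frac{1}{1-\alpha_{\min}} = \frac{p_{\max}}{p_{\max}-1}$. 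Next I would integrate both inequalities. Since $\|g_i\|_{p_i} = 1$ we get $\int \sum_i p_i^{-1} g_i^{p_i} = \sum_i p_i^{-1} = 1$; since $f_i \ge 0$ we get $\int \prod_i g_i = \|\prod_i f_i\|_1 / \prod_i \|f_i\|_{p_i}$; and integrating each squared term yields exactly $\big\| g_i^{p_i/2} - \sum_k p_k^{-1} g_k^{p_k/2}\big\|_2^2$, which after undoing the normalization is the quantity $\big\| f_i^{p_i/2}\|f_i\|_{p_i}^{-p_i/2} - \sum_k p_k^{-1} f_k^{p_k/2}\|f_k\|_{p_k}^{-p_k/2}\big\|_2^2$ appearing in (\ref{bonhold1})--(\ref{bonhold2}). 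Multiplying through by $\prod_i \|f_i\|_{p_i} > 0$ then produces (\ref{bonhold2}) directly, and (\ref{bonhold1}) after replacing the lower bound by its positive part, which is harmless since $\|\prod_i f_i\|_1 \ge 0$.

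The main point requiring care is the integrability bookkeeping that makes the integration step rigorous. Each $g_i^{p_i}$ is in $L^1$ by construction, hence $g_i^{p_i/2} \in L^2$, so by the Cauchy--Schwarz inequality every cross term $\int g_i^{p_i/2} g_k^{p_k/2}$ is finite and the full variance integrand lies in $L^1$; moreover $\prod_i f_i \in L^1$ by Hölder's inequality itself, so its integral is a well-defined finite number and linearity of the integral may be used freely. The pointwise inequality holds a.e.\ because the exceptional set where some $g_i(t) = +\infty$ has measure zero (as $g_i \in L^{p_i}$), and on its complement Theorem \ref{AMGMVar1} applies verbatim. No further obstacle arises; this is precisely the ``standard argument'' mentioned before the statement, and the same computation specializes to the two-function case of \cite[Theorem 2.2]{A4} and to the upper bound of \cite[Corollary 2]{A1}.
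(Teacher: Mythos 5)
Your proposal is correct and follows essentially the same route as the paper: substituting $\alpha_i = p_i^{-1}$ and $x_i = f_i^{p_i}(u)/\|f_i\|_{p_i}^{p_i}$ into (\ref{AMGMVar1eq}), integrating, and multiplying by $\prod_{i=1}^n\|f_i\|_{p_i}$. The only difference is that you spell out the integrability bookkeeping and the harmlessness of the positive part, details the paper leaves implicit.
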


\begin{proof}  Set $\alpha_i = p_i^{-1}$ and $x_i = f_i^{p_i}(u)/\|f_i\|_{p_i}^{p_i}$ in (\ref{AMGMVar1eq}). To obtain
(\ref{bonhold1}, \ref{bonhold2}), integrate
and multiply all terms by $\prod_{i=1}^n\|f_i\|_{p_i}$.
\end{proof}

The bounds in Theorem \ref{AMGMVar1} can be used to obtain
new bounds in terms of other variances.

\begin{corollary} For $n\ge 2$ and $i=1,\dots, n$, let $X = (x_1,\dots,x_n)$ be such that $x_i\ge 0$, and
let $\alpha = (\alpha_1,\dots,\alpha_n)$, $\beta = (\beta_1,\dots,\beta_n)$
 satisfy $\alpha_i, \beta_i > 0$ and $\sum_{i=1}^n \alpha_i = \sum_{i=1}^n \beta_i = 1$.  Then
\begin{equation}\label{diffVar}
\min_{k=1, \dots, n}\left\{\frac{\alpha_{k}}{\beta_{k}}\right\}\
\max\left\{\frac{1}{1-\alpha_{\min}}, \frac{1}{1-\beta_{\min}}\right\}\operatorname{Var}_\beta(X^{1/2})\le
\end{equation}
\begin{equation}\label{diffVar1}
E_\alpha X - \Pi_\alpha X
\le
\max_{k=1, \dots, n}\left\{\frac{\alpha_{k}}{\beta_{k}}\right\}\
 \min\left\{\frac{1}{\alpha_{\min}}, \frac{1}{\beta_{\min}}\right\}\operatorname{Var_\beta}(X^{1/2}).
\end{equation}
\end{corollary}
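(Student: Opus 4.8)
The plan is to deduce the final Corollary directly from Theorem \ref{AMGMVar1} by comparing the two variances $\operatorname{Var}_\alpha(X^{1/2})$ and $\operatorname{Var}_\beta(X^{1/2})$. Writing $Y = X^{1/2}$, the crucial observation is that the variance can be written as a sum of pairwise terms: one has the identity
\begin{equation*}
\operatorname{Var}_\alpha(Y) = \sum_{i=1}^n \alpha_i y_i^2 - \Big(\sum_{i=1}^n \alpha_i y_i\Big)^2 = \sum_{1\le i<j\le n}\alpha_i\alpha_j (y_i-y_j)^2 .
\end{equation*}
This representation makes the comparison of variances for two different weight systems transparent: in the double sum for $\operatorname{Var}_\alpha(Y)$ each coefficient $\alpha_i\alpha_j$ can be bounded above and below by $\big(\max_k \alpha_k/\beta_k\big)\beta_i\beta_j$ and $\big(\min_k \alpha_k/\beta_k\big)\beta_i\beta_j$ respectively (the ratio of a product of two $\alpha$'s to the product of the corresponding two $\beta$'s lies between the square of the minimal ratio and the square of the maximal ratio, but the single-ratio bound already suffices term by term since $\alpha_i\alpha_j/(\beta_i\beta_j) = (\alpha_i/\beta_i)(\alpha_j/\beta_j)$ and we may bound only one of the two factors — in fact bounding both is fine too). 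Summing over $i<j$ gives
\begin{equation*}
\min_{k}\frac{\alpha_k}{\beta_k}\ \operatorname{Var}_\beta(Y)\le \operatorname{Var}_\alpha(Y)\le \max_{k}\frac{\alpha_k}{\beta_k}\ \operatorname{Var}_\beta(Y),
\end{equation*}
where for the lower bound I would use that each $\alpha_i\alpha_j/(\beta_i\beta_j)\ge \big(\min_k \alpha_k/\beta_k\big)\cdot 1$ after pulling out one factor and bounding the remaining single ratio by $\min_k\alpha_k/\beta_k$ only when it helps, but cleanest is simply $\alpha_i\alpha_j \ge (\min_k\alpha_k/\beta_k)\,\beta_i\,\alpha_j \ge$ is awkward, so I will instead bound $\alpha_i\alpha_j/(\beta_i\beta_j)\ge (\min_k\alpha_k/\beta_k)^2$ is too weak; the correct clean statement, which I will prove, is that $\min_k(\alpha_k/\beta_k)\le \alpha_i\alpha_j/(\beta_i\beta_j)$ fails in general — hence I must be careful here, see below.

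Having the variance comparison, the final step is pure bookkeeping. For the upper bound in \eqref{diffVar1}, apply the right inequality of \eqref{AMGMVar1eq} with weights $\alpha$ to get $E_\alpha X - \Pi_\alpha X \le \alpha_{\min}^{-1}\operatorname{Var}_\alpha(X^{1/2})$, then insert $\operatorname{Var}_\alpha(X^{1/2})\le \max_k(\alpha_k/\beta_k)\operatorname{Var}_\beta(X^{1/2})$; alternatively, apply \eqref{AMGMVar1eq} with weights $\beta$ — but the left side $E_\alpha X - \Pi_\alpha X$ uses the $\alpha$-mean and $\alpha$-product, so that route requires comparing $E_\alpha X-\Pi_\alpha X$ to $E_\beta X - \Pi_\beta X$, which is not available. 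Hence the only honest route uses Theorem \ref{AMGMVar1} with the weights $\alpha$ throughout, giving the factor $\alpha_{\min}^{-1}$; the $\min\{\alpha_{\min}^{-1},\beta_{\min}^{-1}\}$ in the statement must therefore come from first applying the variance comparison in the more favorable direction and then Theorem \ref{AMGMVar1}, i.e. one writes $E_\alpha X - \Pi_\alpha X\le \alpha_{\min}^{-1}\operatorname{Var}_\alpha(X^{1/2})$ and, since $\operatorname{Var}_\alpha(X^{1/2})\le\max_k(\alpha_k/\beta_k)\operatorname{Var}_\beta(X^{1/2})$, combining yields $\alpha_{\min}^{-1}\max_k(\alpha_k/\beta_k)$, and separately one notes $\max_k(\alpha_k/\beta_k)\ge \alpha_{\min}/\beta_{\min}$-type relations let one replace $\alpha_{\min}^{-1}$ by $\beta_{\min}^{-1}$ when the latter is smaller, using $\max_k(\alpha_k/\beta_k)\ge \alpha_j/\beta_j$ for all $j$. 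The symmetric argument handles the lower bound in \eqref{diffVar}.

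The main obstacle, and the step I would spend the most care on, is the variance comparison inequality and in particular pinning down the correct constants. The naive termwise bound $\alpha_i\alpha_j/(\beta_i\beta_j) = (\alpha_i/\beta_i)(\alpha_j/\beta_j)$ lies in $[(\min_k\alpha_k/\beta_k)^2,(\max_k\alpha_k/\beta_k)^2]$, which would give squared ratios, not the single ratios appearing in \eqref{diffVar}–\eqref{diffVar1}. To recover the single-ratio bound one should not compare the two variances directly but rather interpolate through Theorem \ref{AMGMVar1}: the honest derivation is $E_\alpha X - \Pi_\alpha X \le \alpha_{\min}^{-1}\operatorname{Var}_\alpha(X^{1/2})$ and then bound $\operatorname{Var}_\alpha(X^{1/2})$ against $\operatorname{Var}_\beta(X^{1/2})$ using only one of the two ratio factors — which is legitimate because $\operatorname{Var}_\alpha(X^{1/2})=E_\alpha X-(E_\alpha X^{1/2})^2$ and $E_\alpha X=\sum\alpha_i x_i\le(\max_k\alpha_k/\beta_k)\sum\beta_i x_i$ while the subtracted square only helps, so a genuinely linear (single-ratio) comparison of the relevant combination is available even though it is not available for the variance in isolation. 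I would therefore structure the proof to apply Theorem \ref{AMGMVar1} first and only then perform a single-ratio weight exchange on the resulting expression, verifying at each stage that the subtracted quantities point the right way; once that is set up, the remaining manipulations reduce to the elementary inequalities $\min_k(\alpha_k/\beta_k)\le \alpha_j/\beta_j\le\max_k(\alpha_k/\beta_k)$ and choosing, in the two factors $\alpha_{\min}^{-1}$ versus $\beta_{\min}^{-1}$, whichever is smaller, together with the corresponding choice of $(1-\alpha_{\min})^{-1}$ versus $(1-\beta_{\min})^{-1}$ on the lower side.
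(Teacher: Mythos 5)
Your proposal has two genuine gaps, one for each half of the constants in \eqref{diffVar}--\eqref{diffVar1}. First, the single-ratio variance comparison $\min_k(\alpha_k/\beta_k)\operatorname{Var}_\beta(Y)\le\operatorname{Var}_\alpha(Y)\le\max_k(\alpha_k/\beta_k)\operatorname{Var}_\beta(Y)$, which you need in order to convert the $\operatorname{Var}_\alpha$-bounds of Theorem \ref{AMGMVar1} into $\operatorname{Var}_\beta$-bounds, is true but is never actually proved in your write-up. The pairwise decomposition $\operatorname{Var}_\alpha(Y)=\sum_{i<j}\alpha_i\alpha_j(y_i-y_j)^2$ only gives squared ratios termwise (as you yourself note, the termwise single-ratio bound fails), and your fallback justification --- $E_\alpha X\le\max_k(\alpha_k/\beta_k)E_\beta X$ ``while the subtracted square only helps'' --- is incorrect: writing $M=\max_k\alpha_k/\beta_k$, to conclude $E_\alpha X-(E_\alpha X^{1/2})^2\le M\bigl(E_\beta X-(E_\beta X^{1/2})^2\bigr)$ along those lines you would need $(E_\alpha X^{1/2})^2\ge M(E_\beta X^{1/2})^2$, which fails in general when $M>1$; the subtracted terms point the wrong way. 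The paper invokes the Dragomir--Jensen inequality (\cite{Dra}, see also \cite{A6}) for exactly this step. If you want a self-contained argument, one is available from $\operatorname{Var}_\alpha(Y)=\min_{c}\sum_i\alpha_i(y_i-c)^2$: for the upper bound take $c=E_\beta Y$ and compare weights pointwise, and argue symmetrically for the lower bound; but your proposal contains no such argument.

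Second, even granting the variance comparison, your route only yields the constants $\max_k(\alpha_k/\beta_k)\,\alpha_{\min}^{-1}$ on the right and $\min_k(\alpha_k/\beta_k)\,(1-\alpha_{\min})^{-1}$ on the left, whereas the statement also asserts the (possibly smaller) factor $\beta_{\min}^{-1}$ and the (possibly larger) factor $(1-\beta_{\min})^{-1}$. You explicitly dismiss the comparison of the AM--GM gaps, $\min_k(\alpha_k/\beta_k)\left(E_\beta X-\Pi_\beta X\right)\le E_\alpha X-\Pi_\alpha X\le\max_k(\alpha_k/\beta_k)\left(E_\beta X-\Pi_\beta X\right)$, as ``not available,'' but this is precisely the other ingredient of the paper's proof (it is \cite[Theorem 2.1]{A1}), and it is what produces the $\beta_{\min}$-based constants, by applying Theorem \ref{AMGMVar1} with the weights $\beta$ and then exchanging the gaps. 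Your substitute --- ``$\max_k(\alpha_k/\beta_k)\ge\alpha_j/\beta_j$-type relations let one replace $\alpha_{\min}^{-1}$ by $\beta_{\min}^{-1}$'' --- is not an argument: when $\beta_{\min}>\alpha_{\min}$ the claimed bound $\max_k(\alpha_k/\beta_k)\beta_{\min}^{-1}\operatorname{Var}_\beta(X^{1/2})$ is strictly stronger than anything the chain through $\alpha_{\min}^{-1}\operatorname{Var}_\alpha(X^{1/2})$ can give, and juggling the individual ratios $\alpha_j/\beta_j$ does not recover it. So as written you prove neither the $\min\{\alpha_{\min}^{-1},\beta_{\min}^{-1}\}$ factor in \eqref{diffVar1} nor the $\max\{(1-\alpha_{\min})^{-1},(1-\beta_{\min})^{-1}\}$ factor in \eqref{diffVar}; you must either prove the gap-comparison inequality or cite it.
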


\begin{proof} From Theorem \ref{AMGMVar1}
and \cite[Theorem 2.1]{A1} , which states that
\begin{equation}\label{refAMGMFugen}
\min_{k=1, \dots, n}\left\{\frac{\alpha_{k}}{\beta_{k}}\right\}\left(E_\beta X - \Pi _\beta X\right)
\le
E_\alpha X - \Pi_\alpha X
\le
\max_{k=1, \dots, n}\left\{\frac{\alpha_{k}}{\beta_{k}}\right\}\left(E_\beta X - \Pi_\beta X\right),
\end{equation}
we immediately obtain
\begin{equation*}
\min_{k=1, \dots, n}\left\{\frac{\alpha_{k}}{\beta_{k}}\right\}\
\frac{1}{1-\beta_{\min}}\operatorname{Var}_\beta(X^{1/2})\le
E_\alpha X - \Pi_\alpha X
\le
\max_{k=1, \dots, n}\left\{\frac{\alpha_{k}}{\beta_{k}}\right\}\
 \frac{1}{\beta_{\min}}\operatorname{Var_\beta}(X^{1/2}).
\end{equation*}
The analogous bounds in terms of $\alpha_{\min}$ follow from the fact that
 given  any vector $Y$, not necessarily positive,
\begin{equation}
\min_{k=1, \dots, n}\left\{\frac{\alpha_{k}}{\beta_{k}}\right\}\
 \operatorname{Var}_\beta(Y)\le \operatorname{Var}_\alpha(Y)
\le
\max_{k=1, \dots, n}\left\{\frac{\alpha_{k}}{\beta_{k}}\right\} \operatorname{Var_\beta}(Y),
\end{equation}
which is a special case of the Dragomir-Jensen inequality
(cf. \cite{Dra} for the original inequality, proven in the discrete
case, and \cite{A6} for a general version).
\end{proof}

\section{Proof of the Theorem}

As  in \cite{CaFi}, we use an induction argument, so the first step is
to prove the inequality when $n=2$ (Lemmas \ref{equal2} and \ref{ineq2}). Unlike
\cite{CaFi}, since no a priori bounds are imposed on $X$, we need to
take into account  the possibility that one or several entries of $X$
be zero (Lemma \ref{zero}).

\begin{lemma}\label{equal2}  For all $x,y \ge 0$, setting $X = (x,y)$
we have
\begin{equation}\label{equal2eq}
\frac{x + y}{2} - \sqrt{ x y} = 2 \operatorname{Var}(X^{1/2}).
\end{equation}
\end{lemma}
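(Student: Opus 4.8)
The plan is to verify the identity \eqref{equal2eq} by direct computation, since for $n=2$ with equal weights $\alpha=(1/2,1/2)$ everything reduces to an elementary algebraic manipulation. First I would compute the right-hand side: writing $Y = X^{1/2} = (x^{1/2}, y^{1/2})$, the mean is $E(Y) = \frac{x^{1/2}+y^{1/2}}{2}$, and by the formula $\operatorname{Var}(Y) = \frac12\sum_{i} y_i^2 - \big(\frac12\sum_i y_i\big)^2$ recalled in the introduction, we get
\[
\operatorname{Var}(X^{1/2}) = \frac{x+y}{2} - \left(\frac{x^{1/2}+y^{1/2}}{2}\right)^2 = \frac{x+y}{2} - \frac{x + 2\sqrt{xy} + y}{4} = \frac{2(x+y) - (x+y) - 2\sqrt{xy}}{4} = \frac{(x+y) - 2\sqrt{xy}}{4}.
\]

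Next I would simply multiply by $2$ to obtain $2\operatorname{Var}(X^{1/2}) = \frac{(x+y)-2\sqrt{xy}}{2} = \frac{x+y}{2} - \sqrt{xy}$, which is exactly the left-hand side of \eqref{equal2eq}. One can also present this more symmetrically by noting $x + y - 2\sqrt{xy} = (\sqrt{x}-\sqrt{y})^2$, so that both sides equal $\frac14(\sqrt{x}-\sqrt{y})^2$; this makes transparent that the quantity is nonnegative and vanishes precisely when $x=y$, consistent with the remark that zero variance corresponds to equality in AM--GM.

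There is essentially no obstacle here: the only point requiring a word is that all the expressions are well defined for $x,y\ge 0$ (the square roots of nonnegative reals, including $0$, cause no trouble), so the identity holds on the full closed quadrant rather than just for $x,y>0$. This is the reason the lemma is stated for $x,y\ge 0$, and it is what will later let the induction handle vectors with vanishing entries without a separate argument in the base case.
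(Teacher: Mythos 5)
Your proof is correct and is exactly the paper's approach: the paper's entire proof is ``Expand the right hand side,'' which is precisely the computation you carry out. One harmless slip in your closing remark: both sides of \eqref{equal2eq} equal $\tfrac12(\sqrt{x}-\sqrt{y})^2$, not $\tfrac14(\sqrt{x}-\sqrt{y})^2$ (it is $\operatorname{Var}(X^{1/2})$ itself that equals $\tfrac14(\sqrt{x}-\sqrt{y})^2$), but this does not affect the verification.
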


\begin{proof} Expand the right hand side.
\end{proof}

Next  we use the following strengthening of Young's inequality,
which appeared in \cite[Lemma 2.1]{A4} under a slightly different
notation (cf.  \cite{Fu} and \cite{A3} for generalizations).

\begin{lemma}\label{betteryoungl}  Let $a \in [0, 1/2]$.  Then for all $x,y \ge 0$
\begin{equation}\label{betteryoung}
2 a \left(\frac{x + y}{2}  - \sqrt{ x y} \right)  \le
a x  + (1-a) y - x^a y^{1-a} \le 2 (1-a) \left( \frac{x + y}{2}
 - \sqrt{ x y} \right) .
\end{equation}
\end{lemma}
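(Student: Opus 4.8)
The plan is to reduce both inequalities, after the substitution $s=x^{1/2}$, $t=y^{1/2}$, to two elementary applications of the weighted arithmetic--geometric mean inequality for two terms (equivalently, Young's inequality).

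First I would dispose of the degenerate cases. If $y=0$ then $0^{1-a}=0$ (since $1-a\in(0,1]$), so the middle term of (\ref{betteryoung}) equals $ax$, while $\frac{x+y}{2}-\sqrt{xy}=x/2$; thus (\ref{betteryoung}) becomes $ax\le ax\le(1-a)x$, which holds precisely because $a\le 1/2$. The case $x=0$ is symmetric and $x=y=0$ is trivial. So assume $x,y>0$ and put $s=x^{1/2}$, $t=y^{1/2}$. By Lemma \ref{equal2} (or simply by expanding the square), $2\bigl(\frac{x+y}{2}-\sqrt{xy}\bigr)=(s-t)^2$, so (\ref{betteryoung}) is equivalent to
\begin{equation*}
a(s-t)^2\ \le\ as^2+(1-a)t^2-s^{2a}t^{2-2a}\ \le\ (1-a)(s-t)^2 .
\end{equation*}

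Expanding $(s-t)^2$ and cancelling, the left inequality becomes $s^{2a}t^{2-2a}\le 2a\,st+(1-2a)t^2$; since $s^{2a}t^{2-2a}=(st)^{2a}(t^2)^{1-2a}$ and the exponents $2a,1-2a$ are nonnegative with sum $1$ (this is exactly where $a\le 1/2$ enters), this is weighted AM--GM applied to $st$ and $t^2$. Similarly the right inequality becomes $2(1-a)\,st\le(1-2a)s^2+s^{2a}t^{2-2a}$. Here I would take the weights $\lambda=\frac{1-2a}{2(1-a)}$ and $\mu=\frac{1}{2(1-a)}$, which are nonnegative and sum to $1$, note the identity $st=(s^2)^\lambda(s^{2a}t^{2-2a})^\mu$ (forced by matching the powers of $s$ and of $t$), apply weighted AM--GM to get $st\le\lambda s^2+\mu\,s^{2a}t^{2-2a}$, and multiply through by $2(1-a)$, using $2(1-a)\lambda=1-2a$ and $2(1-a)\mu=1$.

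All the manipulations are routine; the one step requiring a moment's thought is guessing the correct weights $\lambda,\mu$ in the second application, which however are uniquely determined by the exponent-matching condition. The boundary parameters $a=0$ and $a=1/2$ (where a weight vanishes) and the cases with $s$ or $t$ equal to $0$ cause no trouble: they either fall under the convention $0^c=0$ for $c>0$ or are checked directly in one line.
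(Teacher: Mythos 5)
Your proof is correct: after the substitution $s=x^{1/2}$, $t=y^{1/2}$, both inequalities reduce exactly as you say, the left one to $s^{2a}t^{2-2a}\le 2a\,st+(1-2a)t^2$ (weighted AM--GM with weights $2a,1-2a$, which is where $a\le 1/2$ is used) and the right one to $2(1-a)st\le (1-2a)s^2+s^{2a}t^{2-2a}$ (weighted AM--GM with the weights $\lambda=\tfrac{1-2a}{2(1-a)}$, $\mu=\tfrac{1}{2(1-a)}$, whose exponent check $2\lambda+2a\mu=1$, $(2-2a)\mu=1$ indeed gives $st=(s^2)^\lambda(s^{2a}t^{2-2a})^\mu$), and your treatment of the boundary cases is fine. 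However, your route differs from the paper's: the paper does not prove Lemma \ref{betteryoungl} internally at all, but obtains it from the previously published sharpened Young inequality \cite[Lemma 2.1]{A4}, via the change of variables $a=1/q$, $1-a=1/p$, $x^a=v$, $y^{1-a}=u$ and expansion of the squares. What the paper's citation buys is brevity and an explicit link to the stability-of-H\"older literature; what your argument buys is a short, self-contained, elementary derivation from the two-term weighted AM--GM alone, which makes the lemma (and hence the $n=2$ case, Lemma \ref{ineq2}) independent of the external reference. Either way the constants $2a$ and $2(1-a)$ come out sharp, so your version could serve as a drop-in replacement for the citation.
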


To rewrite \cite[Lemma 2.1]{A4} as above, make the change of variables
$a=1/q$, $1-a = 1/p$, $x^a = v$, $y^{1-a} = u$, and expand the squares.

\begin{lemma}\label{ineq2}  Let $a \in (0, 1/2]$, and set
$\alpha = (a, 1-a)$. Writing $X = (x,y)$, where  $x,y \ge 0$, we have
\begin{equation}\label{ineq2eq}
\frac{1}{1 - a} \operatorname{Var}_\alpha(X^{1/2})  \le
a x  + (1-a) y - x^a y^{1-a} \le \frac{1}{a} \operatorname{Var}_\alpha(X^{1/2}).
\end{equation}
\end{lemma}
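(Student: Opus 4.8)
The plan is to obtain (\ref{ineq2eq}) by feeding the two-sided strengthening of Young's inequality from Lemma \ref{betteryoungl} into the elementary identity of Lemma \ref{equal2}, the only new ingredient being an explicit formula for the $\alpha$-weighted variance of $X^{1/2}$ in the two-variable case. So the first step is to compute $\operatorname{Var}_\alpha(X^{1/2})$ directly: writing $\alpha = (a,1-a)$ and $Y = X^{1/2} = (\sqrt x, \sqrt y)$ and expanding
\[
\operatorname{Var}_\alpha(Y) = a x + (1-a) y - \left(a\sqrt x + (1-a)\sqrt y\right)^2,
\]
the terms collect into $\operatorname{Var}_\alpha(X^{1/2}) = a(1-a)\left(\sqrt x - \sqrt y\right)^2$. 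Since the equal-weights variance is $\operatorname{Var}(X^{1/2}) = \tfrac14\left(\sqrt x - \sqrt y\right)^2$, this amounts to the scaling relation $\operatorname{Var}_\alpha(X^{1/2}) = 4a(1-a)\operatorname{Var}(X^{1/2})$.

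Next I would use Lemma \ref{equal2}, which gives $\frac{x+y}{2} - \sqrt{xy} = 2\operatorname{Var}(X^{1/2})$, and hence, by the scaling relation just recorded, $\frac{x+y}{2} - \sqrt{xy} = \frac{1}{2a(1-a)}\operatorname{Var}_\alpha(X^{1/2})$. Substituting this into both inequalities of Lemma \ref{betteryoungl} (which applies since $a \le 1/2$), the left-hand multiplier $2a$ becomes $\frac{2a}{2a(1-a)} = \frac{1}{1-a}$ and the right-hand multiplier $2(1-a)$ becomes $\frac{2(1-a)}{2a(1-a)} = \frac{1}{a}$, which are exactly the two bounds claimed in (\ref{ineq2eq}).

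There is essentially no obstacle here: once the variance identity $\operatorname{Var}_\alpha(X^{1/2}) = 4a(1-a)\operatorname{Var}(X^{1/2})$ is in place the argument is a one-line substitution, and that identity is itself just an expansion of squares. The only points to keep an eye on are that $a \in (0,1/2]$ makes $a(1-a) > 0$, so the division is legitimate, and that the formula for $\operatorname{Var}_\alpha(X^{1/2})$ and Lemma \ref{equal2} are valid for all $x,y \ge 0$, so no separate degenerate case needs to be treated at this stage (the possibility of vanishing entries will matter only in the inductive step for $n > 2$, handled by Lemma \ref{zero}).
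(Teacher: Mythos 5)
Your proof is correct and follows essentially the same route as the paper: both reduce (\ref{ineq2eq}) to the strengthened Young inequality of Lemma \ref{betteryoungl} via the two-variable variance identity of Lemma \ref{equal2}. Your explicit formula $\operatorname{Var}_\alpha(X^{1/2}) = a(1-a)\left(\sqrt x - \sqrt y\right)^2$ just makes the substitution a bit more transparent than the paper's manipulation of the auxiliary function $f(x,y)$, but the ingredients and logic are the same.
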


\begin{proof} Consider first the right hand side. Write
$$
f(x,y) := \frac{1}{a} \operatorname{Var}_\alpha(X^{1/2})
- a x  - (1-a) y + x^a y^{1-a}.
$$
 To see that $f(x,y) \ge 0$,
simplify first. This yields
$$
f(x,y) := (1-2a) x -
2(1-a)\sqrt{ x y}  + x^a y^{1-a}.
$$
Since
$\sqrt{ x y} = \frac{x + y}{2} -  2 \operatorname{Var}(X^{1/2})$ by
(\ref{equal2eq}), substituting and simplifying we find that $f(x,y) \ge 0$ if and only if
$$a x  + (1-a) y - x^a y^{1-a} \le
2(1-a) 2 \operatorname{Var}(X^{1/2}),$$
 which is just the second inequality
in (\ref{betteryoung}),
together with  (\ref{equal2eq}).

For the left hand side inequality in (\ref{ineq2}), follow essentially
the same steps as before, but using the first inequality
in (\ref{betteryoung}) instead of the second.
\end{proof}

\begin{lemma}\label{zero}  For $n\ge 2$ and $i=1,\dots, n$, let $X = (x_1,\dots,x_n)$ be such that $x_i\ge 0$, and  $x_j = 0$
for some index $j$.
Let
$\alpha_i > 0$ satisfy $\sum_{i=1}^n \alpha_i = 1$.  Then
\begin{equation}\label{zeroeq}
\frac{1}{1-\alpha_{\min}} \operatorname{Var_\alpha}(X^{1/2})
\le  E_\alpha X
\le
\frac{1}{\alpha_{\min}} \operatorname{Var_\alpha}(X^{1/2}).
\end{equation}
\end{lemma}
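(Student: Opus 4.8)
The plan is to reduce to the case where exactly one coordinate vanishes, and then to make the elementary inequality explicit. First I would observe that adding more zero entries only helps the lower bound and hurts nothing on the upper bound — more precisely, $\operatorname{Var}_\alpha(X^{1/2})$ and $E_\alpha X$ depend only on the nonzero coordinates together with the total weight $s := \sum_{i : x_i > 0} \alpha_i$ carried by them, since zero entries contribute $0$ to $E_\alpha X$ and contribute $\alpha_j (E_\alpha X^{1/2})^2$ apiece to the variance. So after relabeling I may assume $x_1, \dots, x_m > 0$ and $x_{m+1} = \cdots = x_n = 0$, with $\sum_{i=1}^m \alpha_i = s < 1$. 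Here $E_\alpha X = \sum_{i=1}^m \alpha_i x_i$ and, writing $y_i = x_i^{1/2}$ and $\mu = \sum_{i=1}^m \alpha_i y_i$, one computes $\operatorname{Var}_\alpha(X^{1/2}) = \sum_{i=1}^m \alpha_i y_i^2 - \mu^2 = E_\alpha X - \mu^2$.

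Thus the claim $(1-\alpha_{\min})^{-1}\operatorname{Var}_\alpha(X^{1/2}) \le E_\alpha X \le \alpha_{\min}^{-1}\operatorname{Var}_\alpha(X^{1/2})$ becomes, after substituting and rearranging, a pair of inequalities relating $E_\alpha X$ and $\mu^2$: the right-hand inequality is equivalent to $\mu^2 \le (1 - \alpha_{\min}) E_\alpha X$, and the left-hand one to $\mu^2 \ge (1 - (1-\alpha_{\min})^{-1}) E_\alpha X \cdot(\text{something} \le 0)$, i.e.\ the left inequality is in fact immediate because its right side $E_\alpha X - (1-\alpha_{\min})^{-1}(E_\alpha X - \mu^2) \le E_\alpha X - (E_\alpha X - \mu^2) = \mu^2$ reduces to checking a sign. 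Wait — more carefully: $(1-\alpha_{\min})^{-1} \ge 1$, so $(1-\alpha_{\min})^{-1}(E_\alpha X - \mu^2) \ge E_\alpha X - \mu^2$, hence $E_\alpha X - (1-\alpha_{\min})^{-1}(E_\alpha X - \mu^2) \le \mu^2$, and since the left-hand inequality of \eqref{zeroeq} is exactly $(1-\alpha_{\min})^{-1}(E_\alpha X - \mu^2) \le E_\alpha X$, i.e.\ $E_\alpha X - (1-\alpha_{\min})^{-1}(E_\alpha X - \mu^2) \ge 0$ is \emph{not} what we want — rather we want $E_\alpha X \ge (1-\alpha_{\min})^{-1}(E_\alpha X - \mu^2)$, equivalently $(1-\alpha_{\min})^{-1}\mu^2 \ge ((1-\alpha_{\min})^{-1} - 1) E_\alpha X = \frac{\alpha_{\min}}{1-\alpha_{\min}} E_\alpha X$, equivalently $\mu^2 \ge \alpha_{\min} E_\alpha X$. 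So both halves reduce to a single two-sided statement:
\begin{equation*}
\alpha_{\min} \, E_\alpha X \le \mu^2 \le (1-\alpha_{\min})\, E_\alpha X.
\end{equation*}

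It remains to prove this two-sided bound on $\mu^2 = \left(\sum_{i=1}^m \alpha_i y_i\right)^2$ in terms of $\sum_{i=1}^m \alpha_i y_i^2 = E_\alpha X$, under the constraint $\sum_{i=1}^m \alpha_i = s \le 1 - \alpha_{\min}$ (the last inequality because the omitted zero-weight index $j$ satisfies $\alpha_j \ge \alpha_{\min}$, so $s = 1 - \sum_{i>m}\alpha_i \le 1 - \alpha_j \le 1 - \alpha_{\min}$). For the upper bound, Cauchy--Schwarz gives $\mu^2 = \left(\sum \alpha_i y_i\right)^2 \le \left(\sum \alpha_i\right)\left(\sum \alpha_i y_i^2\right) = s \cdot E_\alpha X \le (1-\alpha_{\min}) E_\alpha X$, which is the harder of the two and the one genuinely using that a zero is present. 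For the lower bound $\mu^2 \ge \alpha_{\min} E_\alpha X$: one of the nonzero terms, say index $i_0$, satisfies $\alpha_{i_0} y_{i_0}^2 \ge \alpha_{\min}\cdot(\text{max term})$... here I would instead argue $\mu \ge \alpha_{i} y_i$ for each $i \le m$ (all terms nonnegative), so $\mu^2 \ge \alpha_i^2 y_i^2 \ge \alpha_i \alpha_{\min} y_i^2$; wait that only bounds one term. The clean route: $\mu^2 \ge \max_i \alpha_i^2 y_i^2 \ge \alpha_{\min} \max_i \alpha_i y_i^2 \ge \alpha_{\min} \cdot \frac{1}{m}\sum_i \alpha_i y_i^2$ is too weak. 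Better: $\mu^2 = \mu \cdot \mu \ge \mu \cdot \alpha_i y_i$ only for one $i$. I think the correct argument is: since $\mu \ge \alpha_i y_i$ for all $i$, multiply the $i$-th by $\alpha_i y_i$ and sum: $\mu \sum_i \alpha_i y_i \ge \sum_i \alpha_i^2 y_i^2$, i.e.\ $\mu^2 \ge \sum_i \alpha_i^2 y_i^2 \ge \alpha_{\min}\sum_i \alpha_i y_i^2 = \alpha_{\min} E_\alpha X$. That works. The main obstacle is simply organizing these reductions cleanly; once $\operatorname{Var}_\alpha(X^{1/2}) = E_\alpha X - \mu^2$ is in hand and the problem is recast as $\alpha_{\min} E_\alpha X \le \mu^2 \le (1-\alpha_{\min}) E_\alpha X$, both bounds are one-line consequences of Cauchy--Schwarz-type manipulations, with the upper bound being the place where the hypothesis ``some $x_j = 0$'' does real work through $s \le 1 - \alpha_{\min}$.
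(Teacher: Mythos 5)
Your argument is correct, and it takes a genuinely different (and considerably more elementary) route than the paper. The paper proves this lemma by induction on the number of nonzero coordinates, using $1$-homogeneity to restrict to a simplex, compactness to get a minimum, and Lagrange multipliers to show interior critical points have all coordinates equal --- i.e.\ the same machinery it later uses for the main theorem. You instead observe that, with $y_i = x_i^{1/2}$ and $\mu = E_\alpha X^{1/2}$, one has $\operatorname{Var}_\alpha(X^{1/2}) = E_\alpha X - \mu^2$, so both inequalities of the lemma are equivalent to the two-sided bound $\alpha_{\min} E_\alpha X \le \mu^2 \le (1-\alpha_{\min}) E_\alpha X$; the upper bound follows from Cauchy--Schwarz over the nonzero coordinates, with the hypothesis ``some $x_j=0$'' entering only through the weight deficit $s = \sum_{x_i>0}\alpha_i \le 1-\alpha_j \le 1-\alpha_{\min}$, and the lower bound from $\mu^2 \ge \sum_i \alpha_i^2 y_i^2 \ge \alpha_{\min}\sum_i \alpha_i y_i^2$ (valid for any nonnegative vector, no zero needed). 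Both steps check out, so you get the lemma with no induction, no compactness, and no calculus, and your argument makes transparent exactly where the vanishing coordinate is used; what it does not reproduce is the paper's stylistic uniformity with the proof of the main theorem and its identification of the equality cases (which the lemma statement does not require anyway). The only criticism is presentational: the mid-proof false starts (``Wait --- more carefully\dots'') should be removed, since the final reduction you settle on is the correct one and can be stated cleanly from the outset.
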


\begin{proof} We prove the right hand side inequality. The argument
for the left hand side is entirely analogous. Define, for non-negative  $X = (x_1,\dots,x_n)$,
$$
f(X) := \frac{1}{\alpha_{\min}} \operatorname{Var}_\alpha(X^{1/2}) - E_\alpha X
= \frac{1}{\alpha_{\min}}\left(\sum_{i=1}^n \alpha_i x_i - \left(\sum_{i=1}^n \alpha_i x_i^{1/2}\right)^2\right) - \sum_{i=1}^n \alpha_i x_i.
$$
 To see that if some coordinate equals zero then $f(X) \ge 0$, we use induction on the number of
 non-zero coordinates. Suppose first that exactly one
coordinate in $X$ is different from  zero, say
 $x_i >0$. Then
$$
f(X) = \frac{1}{\alpha_{\min}}\left(\alpha_i x_i - \left(\alpha_i x_i^{1/2}\right)^2\right) - \alpha_i x_i =
\left( \frac{1 -\alpha_{\min} - \alpha_i}{\alpha_{\min}}\right) \alpha_i x_i
\ge 0,
$$
with equality if and only if $n=2$ and $\alpha_i \ne \alpha_{\min}$.

Next, suppose  that exactly $k$
coordinates in $X = (x_1,\dots,x_n)$ are larger that  zero,
for $2\le k\le n-1$, and that the result is true whenever fewer
than $k$ coordinates are non-zero. Without loss  of
generality, we may assume that $x_1,\dots, x_k >0$. Since
$f(X)$ is $1$-homogeneous, that is, for every $t>0$, $f(tX) = t f(X)$,
we may also assume that $
\sum_{i=0}^k \alpha_i x_i =1.
$
Write $h(x_1, \dots, x_k)= f(x_1, \dots,x_k, 0, \dots, 0)$. By
compactness of the simplex $S:=\{(x_1, \dots, x_k) \in\mathbb{R}^k:
x_i\ge 0$ for $i = 1,\dots, k,$ and $\sum_{i=0}^k \alpha_i x_i =1\}$, $h$ has a global minimum  on $S$. If the minimum  is achieved at
the boundary, then $h\ge 0$ on $S$
by the induction hypothesis, so
it is enough to check that  $h(y_1, \dots, y_k) \ge 0$ whenever
  $(y_1, \dots, y_k) $ is a critical point of $h$ in the relative
  interior of $S$. Using Lagrange
  multipliers, we obtain, for $j = 1,\dots, k,$
$$
h_j(x_1, \dots, x_k) =
\frac{1}{\alpha_{\min}}\left(\alpha_j  - \alpha_j \left(\sum_{i=1}^k \alpha_i x_i^{1/2}\right) x_j^{-1/2} \right) - \alpha_j = \lambda \alpha_j.
$$
Simplifying we find that
$$  - \frac{1}{\alpha_{\min}}\left(\sum_{i=1}^k \alpha_i x_i^{1/2}\right) x_j^{-1/2}  = \lambda + 1 - \frac{1}{\alpha_{\min}},
$$
and since the left hand side is not zero, so is the right hand side.
Thus,
$$
x_j^{1/2} = \frac{\sum_{i=1}^k \alpha_i x_i^{1/2}}{1- \alpha_{\min} - \lambda  \alpha_{\min}},
$$
and it follows that whenever $(y_1, \dots, y_k) $ is a critical
point, all its coordinates are equal, say, to the value $t$
defined by
$$
t^{1/2} = \frac{\sum_{i=1}^k \alpha_i y_i^{1/2}}{1- \alpha_{\min} - \lambda  \alpha_{\min}}.
$$
Then
$$
h(t, \dots, t) = \frac{t \sum_{i=1}^k \alpha_i}{\alpha_{\min}}\left(1  - \sum_{i=1}^k \alpha_i -  \alpha_{\min}\ \right) \ge 0,
$$
with equality if and only if $k=n-1$ and $\alpha_{n} = \alpha_{\min}$.
\end{proof}

\vskip .3 cm

\noindent {\em Proof of the theorem.}
We are now ready to show that for every $X\in [0,\infty)^n$,
$$
f(X) := \frac{1}{\alpha_{\min}} \operatorname{Var}_\alpha(X^{1/2}) - E_\alpha X + \Pi_\alpha X
\ge 0,
$$
which proves the right hand side inequality in (\ref{AMGMVar1eq}); the argument
for the left hand side inequality is entirely analogous.
  Again, by 1-homogeneity it is enough to show
that $f(Y) \ge 0$ for every critical point $Y$ of $f$ in the simplex
$S:=\{(x_1, \dots, x_n) \in\mathbb{R}^n:
x_i\ge 0$ for $i = 1,\dots, n,$ and $\sum_{i=0}^n \alpha_i x_i =1\}$.
Now, on the boundary of $S$, $f(X) \ge 0$ by Lemma \ref{zero}. In the relative
interior of $S$ we use Lagrange multipliers together with induction.
The induction hypothesis states that whenever $\beta$ is a sequence
of fewer than $n$ weights (positive and adding up to 1),
and $W\in [0,\infty)^{n-1}$, we have
\begin{equation}\label{IndStep}
 E_\beta W - \Pi_\beta W
\le
\frac{1}{\beta_{\min}} \operatorname{Var_\beta}(W^{1/2}).
\end{equation}
If $n= 2$ the result holds by Lemma \ref{ineq2}, so suppose that
$n\ge 3$, and let $Y$ be a critical point of $f$ in the relative
interior of $S$. Then
$$
f_j(y_1, \dots, y_n) =
\frac{1}{\alpha_{\min}}\left(\alpha_j  -  \alpha_j  \left(\sum_{i=1}^k \alpha_i y_i^{1/2}\right) y_j^{-1/2} \right) - \alpha_j  +  \alpha_j  \left(\prod_{i=1}^n y_i^{\alpha_i}\right) y_j^{-1}
= \lambda \alpha_j.
$$
Simplifying, this yields
$$
\left(\frac{1}{\alpha_{\min}} - 1 - \lambda\right) y_j - \left(\frac{\sum_{i=1}^k \alpha_i y_i^{1/2}}{\alpha_{\min}}\right) y_j^{1/2}   + \prod_{i=1}^n y_i^{\alpha_i}
= 0.
$$
Writing $A = 1/\alpha_{\min} - 1 - \lambda$,
$B=\sum_{i=1}^n \alpha_i y_i^{1/2}/\alpha_{\min}$,
and $C= \prod_{i=1}^n y_i^{\alpha_i}$
we find
that $y_1^{1/2},\dots,y_n^{1/2}$ are all positive solutions of
$$
A t^2 - B t + C=0.
$$
Given that this equation
 has at most two roots and
 $n\ge 3$,  at least
two coordinates of $Y$ are equal. By relabeling if needed, we may assume
that  $y_{n-1} = y_n$. Set, for $k<n-1$, $\beta_k = \alpha_k$,
$w_k = y_k$, and define
$\beta_{n-1} = \alpha_{n-1} + \alpha_{n}$,   $w_{n-1} = y_{n-1}$.
With
$\beta := (\beta_1,\dots,\beta_{n-1})$ and $W := (w_1,\dots,w_{n-1})$,
 we have
\begin{equation}\label{IndStepap}
 E_\alpha Y - \Pi_\alpha Y
=
 E_\beta W - \Pi_\beta W
\le
\frac{1}{\beta_{\min}} \operatorname{Var_\beta}(W^{1/2})
\le
\frac{1}{\alpha_{\min}} \operatorname{Var_\alpha}(Y^{1/2})
\end{equation}
since $\alpha_{\min} \le \beta_{\min}$. Thus, $f(Y)\ge 0$ at all the critical points $Y\in S$, so
$f(X)\ge 0$ for all non-negative $X$.

For the left hand side inequality in (\ref{AMGMVar1eq}),
note that since  $\alpha_{\min} \le \beta_{\min}$, we have
$(1 -  \alpha_{\min})^{-1} \le (1- \beta_{\min})^{-1}$, and the result follows again by
induction. \qed

\end{document}